\newtheorem{definition}{Definition}[section]
\newtheorem{theorem}[definition]{Theorem}
\newtheorem{lemma}[definition]{Lemma}
\newtheorem{corollary}[definition]{Corollary}
\newtheorem{remark}[definition]{Remark}
\newcommand{\id}{\mathop{\mathrm{D}_i}}
\newcommand{\ind}{\mathop{\mathrm{I}}}
\newcommand{\iso}{\mathop{\mathrm{iso}}}
\newcommand{\essi}{\mathop{\mathrm{Ess}_i}}
\newcommand{\join}{\mathop{\mathrm{*}}}
\newcommand{\corona}{\mathop{\mathrm{\circ}}}
\begin{document}
\title{The Independent Domination Polynomial}

\author{Markus Dod \thanks{This research was supported by the ESF and a scholarship from the State of Saxony.}\\University of Applied Sciences Mittweida\\ Faculty Mathematics/Sciences/Computer Science\\mdod@hs-mittweida.de}

\date{29.10.2015}

\maketitle

\begin{abstract}
A vertex subset $W\subseteq V$ of the graph $G=(V,E)$ is an independent 
dominating set if every vertex in $V\backslash W$ is adjacent
to at least one vertex in $W$ and the vertices of $W$ are pairwise non-adjacent. 
The independent domination polynomial is the 
ordinary generating function for the number of independent dominating sets 
in the graph. We investigate in this paper properties of the independent domination 
polynomial and some interesting connections to well known counting problems.
\end{abstract}

\section{Introduction}
We consider finite simple undirected graphs and identify edges with two-element subsets of the vertex set. We call a graph \emph{non-trivial} if it has a least one edge. Let $G=(V,E)$ be a graph with vertex set $V$ and edge set $E$. Let $W\subseteq V$, then the \emph{(vertex) induced subgraph} $G[W]$ is the graph
\[
G[W] = (W, \{\{u,v\}\in E: u,v\in W\}).
\]
We denote the number of isolated vertices in $G$ by $\mathrm{iso}(G)$. The \emph{open neighborhood} $N_{G}(v)$ of a vertex 
$v\in V$ is the set of all vertices that are adjacent to $v$ in $G$. Analogously, we define
\[
	N_{G}(W)=\bigcup_{v\in W}N_{G}(v)
\]
for any vertex subset $W\subseteq V$. The \emph{closed neighborhood} $N_{G}[W]$ of a vertex subset $W\subseteq V$ is simply the set $N_{G}(W)\cup W$. If the graph is known from the context, we write $N(v)$ and $N(W)$ instead of $N_{G}(v)$ and $N_{G}(W)$, respectively. The maximum degree $\Delta(G)$ of the graph $G$ is defined as $\max_{v\in V}{|N(v)|}$.

A vertex set $W\subseteq V$ is called an \emph{independent dominating set} of $G$ if 
$N_{G}[W]=V$ and the induced subgraph has only isolated vertices, in other words $\mathrm{iso}(G[W])=|W|$. The \emph{independent domination polynomial} of a graph $G$ is the 
ordinary generating function for the number of independent dominating sets of $G$:
\[
	\id(G,x)=\sum_{\substack{W\subseteq V \\N_{G}[W]=V\\\mathrm{iso}(G[W])=|W|}}x^{|W|}.
\]

The independent domination number of the graph is a well known graph parameter (see \cite{Goddard2013} for an overview). But little is known about counting independent dominating sets. The independent domination polynomial can be obtained from the trivariate total domination polynomial (see \cite{Dod2014a}).

The paper is organized as follows. In the following section we introduce some basic properties of the independent domination polynomial. In Section \ref{section::recurrenceEquations} we prove some basic recurrence equations and in Section \ref{section::id-specialgraphclasses} we give equations for some special graph classes.

\section{The independent domination polynomial}
Like other graph polynomials the independent domination polynomial is multiplicative in respect to the components of the graph (which also follows from the connection to the trivariate total domination polynomial \cite{Dod2014a}).

\begin{lemma}\label{lemma::id-twocomponents}
Let $G=(V,E)$ be a graph with two components $G_1=(V_1,E_1)$ and $G_2=(V_2,E_2)$. Then
\begin{equation*}
\id(G,x) = \id(G_1,x) \id(G_2,x).
\end{equation*}
\end{lemma}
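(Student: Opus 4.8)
The plan is to exhibit a bijection between the independent dominating sets of $G$ and the pairs consisting of an independent dominating set of $G_1$ together with one of $G_2$, and then to read off the factorization from the additivity of cardinalities in the exponent. First I would fix, for an arbitrary $W\subseteq V$, the decomposition $W_1 = W\cap V_1$ and $W_2 = W\cap V_2$. Since $G_1$ and $G_2$ are the components of $G$, the vertex set is the disjoint union $V = V_1\cup V_2$ and there are no edges joining $V_1$ to $V_2$; hence $W\mapsto (W_1,W_2)$ is a bijection between subsets of $V$ and pairs $(W_1,W_2)$ with $W_1\subseteq V_1$ and $W_2\subseteq V_2$, and it satisfies $|W| = |W_1| + |W_2|$.

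The core of the argument is to show that $W$ is an independent dominating set of $G$ if and only if $W_1$ and $W_2$ are independent dominating sets of $G_1$ and $G_2$, respectively. For the domination condition I would use that the absence of cross-edges gives $N_G(v) = N_{G_i}(v)$ whenever $v\in V_i$, and therefore $N_G[W] = N_{G_1}[W_1]\cup N_{G_2}[W_2]$. Because $N_{G_i}[W_i]\subseteq V_i$ and the $V_i$ are disjoint, this union equals $V_1\cup V_2$ precisely when each $N_{G_i}[W_i]$ exhausts the whole of $V_i$. For the independence condition I would observe that $G[W]$ is the disjoint union of the induced subgraphs $G_1[W_1]$ and $G_2[W_2]$, so that $\iso(G[W]) = \iso(G_1[W_1]) + \iso(G_2[W_2])$. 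Since $\iso(G_i[W_i])\le |W_i|$ always holds, the global equality $\iso(G[W]) = |W|$ can be attained only when equality holds in each summand, which is exactly the requirement $\iso(G_i[W_i]) = |W_i|$.

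With the bijection established, I would finish by splitting the defining sum. The index set of independent dominating sets $W$ of $G$ corresponds bijectively to the product of the two smaller index sets, and $x^{|W|} = x^{|W_1|}x^{|W_2|}$, so
\[
\id(G,x) = \sum_{W} x^{|W|} = \left(\sum_{W_1} x^{|W_1|}\right)\left(\sum_{W_2} x^{|W_2|}\right) = \id(G_1,x)\,\id(G_2,x),
\]
each inner sum ranging over the independent dominating sets of the respective component.

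I do not expect a serious obstacle; the only point demanding care is the independence direction, where one must invoke that $\iso$ is additive over disjoint unions while being bounded above by the number of vertices in each piece, so that the equality $\iso(G[W]) = |W|$ cannot hold unless both local equalities hold simultaneously. Everything else is routine bookkeeping made possible by the absence of edges between the two components.
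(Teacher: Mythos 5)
Your proof is correct and takes the same route the paper intends: the paper dismisses this lemma in one line as following ``directly from the definition,'' and your argument is precisely the bookkeeping that claim suppresses (the bijection $W\mapsto(W\cap V_1,W\cap V_2)$, additivity of $\iso$ over the components, and the factorization of the generating sum). In particular, your careful handling of the independence condition via $\iso(G_i[W_i])\le |W_i|$ is sound and fills in exactly what the paper leaves implicit.
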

\begin{proof}
The proof of the lemma follows directly from the definition of the polynomial.
\end{proof}

The join of two graphs was first defined by Harary in 1969.
\begin{definition}\cite{Harary1969}
The join $G \join H$ of two graphs $G=(V(G),E(G))$ and $H=(V(H),E(H))$ is the graph union $G\cup H$ together with all the edges joining $V(G)$ and $V(H)$.
\end{definition}

\begin{theorem}
Let $G=(V(G),E(G))$ and $H=(V(H),E(H))$ be two non-trivial graphs. Then
\begin{equation*}
\id(G \join H,x) = \id(G,x)+\id(H,x).
\end{equation*}
\end{theorem}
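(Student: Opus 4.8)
The plan is to analyze the structure of independent dominating sets in the join $G \join H$ directly. The key observation is that in the join, every vertex of $G$ is adjacent to every vertex of $H$. So if an independent set $W$ contains a vertex $u \in V(G)$ and a vertex $v \in V(H)$, then $u$ and $v$ would be adjacent in $G \join H$, contradicting independence. Therefore every independent set $W$ of $G \join H$ must be entirely contained in $V(G)$ or entirely contained in $V(H)$ (the empty set is the only set contained in both, but since both graphs are non-trivial, the empty set dominates nothing and is never a dominating set). This dichotomy is what will produce the additive formula.

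First I would argue that if $W \subseteq V(G)$ is an independent dominating set of $G \join H$, then $W$ is already an independent dominating set of $G$, and conversely. Independence is immediate since $G$ is an induced subgraph of $G \join H$ and the edges added by the join go only between the two parts. For domination, note that any nonempty $W \subseteq V(G)$ automatically dominates all of $V(H)$ through the join edges, so the condition $N_{G \join H}[W] = V(G \join H)$ reduces precisely to $N_G[W] = V(G)$. The same reasoning applies symmetrically to sets $W \subseteq V(H)$. Thus the independent dominating sets of $G \join H$ contained in $V(G)$ are in bijection with those of $G$, and likewise for $H$.

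Combining these pieces, I would partition the independent dominating sets of $G \join H$ into the two disjoint families (those inside $V(G)$ and those inside $V(H)$) and sum the generating functions, which yields $\id(G \join H,x) = \id(G,x) + \id(H,x)$.

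The step requiring the most care is handling the empty set and confirming the non-triviality hypothesis is what makes the argument clean. The empty set is trivially independent and is a subset of both $V(G)$ and $V(H)$, so a naive union would risk double-counting. Because both $G$ and $H$ are non-trivial (hence nonempty), the empty set fails to dominate, so it contributes to neither $\id(G,x)$ nor $\id(H,x)$ nor to $\id(G \join H,x)$, and no double-counting occurs. I would also verify that every nonempty independent dominating set of $G$ remains a \emph{dominating} set of $G \join H$ precisely because of the join edges covering the opposite part — this is the place where the join structure, rather than just the disjoint union, is essential, and it is worth stating explicitly rather than treating as routine.
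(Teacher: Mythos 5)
Your proof is correct and follows the same approach as the paper, whose entire argument is the single observation that every independent dominating set of $G \join H$ lies entirely within $V(G)$ or entirely within $V(H)$. You simply make explicit the details the paper leaves implicit (the reduction of the domination condition via the join edges and the non-double-counting of the empty set), which is fine but not a different route.
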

\begin{proof}
It suffices to observe that any independent dominating set of $G \join H$ is completely contained in exactly one of the vertex sets $V(G)$ or $V(H)$.
\end{proof}

Frucht and Harary introduced 1970 the corona of two graphs.
\begin{definition}\cite{Frucht1970}
Let $G=(V(G),E(G))$ and $H=(V(H),E(H))$ be graphs. Then the corona of $G$ and $H$ is the graph $G\corona H$ which is the disjoint union of $G$ and $|V(G)|$ copies of $H$ and every vertex $v$ of $G$ is adjacent to every vertex in the corresponding copy of $H$.
\end{definition}

The independence polynomial is the ordinary generating function for the number of independent sets in the graph and is denoted by $\ind(G,x)$ (see \cite{Levit2005}).

\begin{theorem}
Let $G=(V(G),E(G))$ and $H=(V(H),E(H))$ be graphs, $|V(G)|\geq 1$, $|V(H)|\geq 1$ and $n=|V(G)|$. Then
\begin{equation*}
\id(G \corona H,x) = \id(H,x)^{n} \ind\left(G,\frac{x}{\id(H,x)}\right).
\end{equation*}
\end{theorem}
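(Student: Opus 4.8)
The plan is to establish a structural decomposition of the independent dominating sets of $G \corona H$ and then translate it directly into generating functions. Write $v_1, \dots, v_n$ for the vertices of $G$ and $H_1, \dots, H_n$ for the corresponding copies of $H$, so that $v_i$ is adjacent to every vertex of $H_i$ and to no other copy. Given an independent dominating set $W$ of $G \corona H$, set $S = \{i : v_i \in W\}$ and $D_i = W \cap V(H_i)$ for each $i$. The first step is to show that $S$ must be an independent set of $G$ (the $G$-structure is preserved in the corona), that $D_i = \emptyset$ whenever $i \in S$ (since $v_i \in W$ forbids any of its neighbors, that is, all of $H_i$, from lying in $W$), and that $D_i$ is an independent dominating set of the copy $H_i \cong H$ whenever $i \notin S$.

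The crucial and slightly subtle observation is that no domination requirement is imposed on $G$ itself. Indeed, for $i \notin S$ the vertex $v_i$ is automatically dominated by any vertex of the nonempty set $D_i$, so we never need a neighbor of $v_i$ inside $G$. Conversely, I would verify that any independent set $S \subseteq V(G)$ together with, for each $i \notin S$, an arbitrary independent dominating set $D_i$ of $H$ yields a valid independent dominating set of $G \corona H$: independence holds because distinct copies are mutually non-adjacent and each $v_i$ with $i \in S$ meets only $H_i$ (which contributes nothing), while domination holds because $v_i$ with $i \in S$ covers $H_i$, each $D_i$ with $i \notin S$ dominates $H_i$ and covers $v_i$, and the vertices $v_i$ with $i \in S$ lie in $W$. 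This establishes a bijection between independent dominating sets $W$ and pairs consisting of an independent set $S$ of $G$ and a family $(D_i)_{i \notin S}$ of independent dominating sets of $H$, with $|W| = |S| + \sum_{i \notin S} |D_i|$.

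With the bijection in hand the generating function assembles itself. Summing $x^{|W|}$ over all such $W$ factors as a sum over independent sets $S$ of $G$ of $x^{|S|}$ times $\prod_{i \notin S} \id(H,x)$, since each of the $n - |S|$ free copies independently contributes the factor $\sum_{D} x^{|D|} = \id(H,x)$. This gives $\sum_{S} x^{|S|}\, \id(H,x)^{\,n-|S|}$, and pulling out $\id(H,x)^n$ leaves $\sum_S \left( x/\id(H,x) \right)^{|S|} = \ind\!\left(G, x/\id(H,x)\right)$, which is exactly the claimed identity. I would also note that this expression is a genuine polynomial despite the apparent division, because $\ind(G,y)$ has degree at most $n$ in $y$, so every surviving term carries $\id(H,x)^{\,n-|S|}$ with $n - |S| \geq 0$.

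I expect the main obstacle to be the careful bookkeeping in the structural step rather than the algebra: one must argue precisely why each uncovered copy is forced to carry a full independent dominating set of $H$ (its only external neighbor $v_i$ being excluded) and why, simultaneously, no domination condition survives on $G$. Once that characterization is pinned down, the factorization over the $n$ copies and the recognition of the independence polynomial are routine.
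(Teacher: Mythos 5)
Your proof is correct and takes essentially the same approach as the paper: both decompose each independent dominating set of $G \corona H$ into an independent set $S \subseteq V(G)$ together with an independent dominating set of $H$ in each of the $n-|S|$ copies attached to $V(G)\setminus S$, then factor the generating function as $\sum_{S} x^{|S|}\id(H,x)^{n-|S|} = \id(H,x)^{n}\ind\bigl(G, x/\id(H,x)\bigr)$. Your write-up merely makes explicit the bijection and the observation that no domination condition survives on $G$ itself, details the paper's proof leaves implicit.
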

\begin{proof}
Every independent vertex subset of $G$ can be expanded to an independent dominating set in $G \circ H$. Let $S\subseteq V(G)$ be such an independent set, then $S$ together with dominating vertices adjacent to the vertices in $V(G)-S$ form an independent dominating set. Precisely if $|S|=k$, then  in $(n-k)$ copies of $H$ an independent dominating set must exist. Let $i_k$ be the coefficient of $x^k$ in $\ind(G,x)$. Then 
\begin{align*}
\id(G \corona H,x) =& \sum \limits_{k=0}^{n} i_k x^k \id(H,x)^{n-k}\\
=& \id(H,x)^{n} \sum \limits_{k=0}^{n}i_k x^{k}\id(H,x)^{-k}\\
=& \id(H,x)^{n} \ind\left(G,\frac{x}{\id(H,x)}\right)
\end{align*}
and therefore the theorem follows.
\end{proof}

\begin{corollary}
Let $G=(V,E)$ be a graph with $n$ vertices and $E_r$ an edgeless graph with $r$ ($r>0$) vertices. Then
\begin{equation*}
\id(G \corona E_r,x) = x^{rn} \ind(G,x^{1-r}).
\end{equation*}
\end{corollary}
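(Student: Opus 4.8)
The plan is to apply the preceding theorem on the corona, which gives the independent domination polynomial of $G \corona H$ as $\id(H,x)^{n} \ind\!\left(G, \tfrac{x}{\id(H,x)}\right)$, with the special choice $H = E_r$. So the first step is to compute $\id(E_r, x)$ explicitly. The edgeless graph $E_r$ on $r$ vertices has no edges, so every vertex is isolated; the only independent dominating set is the entire vertex set $V(E_r)$ itself, since an independent dominating set must dominate all $r$ vertices and no vertex dominates any other. (For $r>0$ this is nonempty and unique.) Hence $\id(E_r,x) = x^{r}$.

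Having established $\id(E_r,x)=x^r$, the remaining step is a direct substitution into the corona formula. I would write
\[
\id(G \corona E_r, x) = \id(E_r,x)^{n}\, \ind\!\left(G, \frac{x}{\id(E_r,x)}\right) = \left(x^{r}\right)^{n}\, \ind\!\left(G, \frac{x}{x^{r}}\right).
\]
Then I simplify the two factors: $\left(x^{r}\right)^{n} = x^{rn}$, and $\tfrac{x}{x^{r}} = x^{1-r}$, yielding $\id(G \corona E_r, x) = x^{rn}\, \ind\!\left(G, x^{1-r}\right)$, exactly the claimed identity.

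The main thing to verify carefully is the base computation $\id(E_r,x)=x^r$, since the whole corollary rests on it; the verification is immediate from the definition of an independent dominating set once one observes that isolated vertices force every vertex into $W$. The only other point worth a remark is that the theorem being invoked requires $|V(H)|\geq 1$, which holds here because $r>0$; this is precisely why the hypothesis $r>0$ appears in the statement. No genuine obstacle arises — the corollary is a clean specialization of the corona theorem, and I expect the proof to be a short one- or two-line substitution argument.
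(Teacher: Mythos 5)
Your proposal is correct and matches the paper's intent exactly: the paper states this corollary without proof as an immediate specialization of the corona theorem with $H=E_r$, relying on the fact (noted later in the paper) that $\id(E_r,x)=x^r$. Your substitution argument, including the check that $r>0$ ensures $|V(E_r)|\geq 1$, is precisely the omitted routine verification.
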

 
\begin{definition}\cite{Goddard2013}
Let $G=(V,E)$ be a graph. Then the r-expansion $\exp(G,r)$ is the graph obtained from $G$ by replacing every vertex $v\in V$ with an independent set $I_v$ of size $r$ and replacing every edge $uv\in E$ with a complete bipartite graph with the bipartite sets $I_u$ and $I_v$.
\end{definition}

\begin{theorem}
Let $G=(V,E)$ be a graph and $\exp(G,r)$ its r-expansion. Then
\begin{equation*}
\id(\exp(G,r),x)=\id(G,x^r)
\end{equation*}
\end{theorem}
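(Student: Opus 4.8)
The plan is to exhibit an explicit size-scaling bijection between the independent dominating sets of $G$ and those of $\exp(G,r)$: a set of size $k$ in $G$ should correspond to a set of size $rk$ in $\exp(G,r)$. Once that is in place, the polynomial identity falls out by substituting $x^r$ for $x$ in the generating function, so essentially all the work is combinatorial rather than algebraic.

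The crucial structural observation — and what I expect to be the main obstacle — is that every independent dominating set $W$ of $\exp(G,r)$ must meet each block $I_v$ in an all-or-nothing fashion: for each $v\in V$ one has either $W\cap I_v=\emptyset$ or $W\cap I_v=I_v$. I would argue this by contradiction. Suppose some $w\in I_v$ lies in $W$ while another $w'\in I_v$ does not. Independence of $W$ forces $W\cap I_u=\emptyset$ for every $u\in N_G(v)$, since $w$ is adjacent to all of $I_u$. But the block $I_v$ is edgeless, so the only neighbors of $w'$ lie in $\bigcup_{u\in N_G(v)}I_u$; hence $w'$ has no neighbor in $W$, and $w'\notin W$, contradicting that $W$ dominates $w'$. (For an isolated $v$ the same reasoning forces $I_v\subseteq W$, as it should.)

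Granting this, I would set $S=\{v\in V: I_v\subseteq W\}$ and verify $S$ is an independent dominating set of $G$. Independence is immediate: if $u,v\in S$ were adjacent in $G$, the complete bipartite join between $I_u\subseteq W$ and $I_v\subseteq W$ would violate independence of $W$. For domination, if $v\notin S$ then the structural lemma gives $I_v\cap W=\emptyset$, and any $w\in I_v$ is dominated only from a block $I_u$ with $uv\in E$, forcing $u\in S$, so $v$ has a neighbor in $S$. Conversely, from an independent dominating set $S$ of $G$ I would set $W=\bigcup_{v\in S}I_v$ and check that independence and domination of $S$ translate back into the corresponding properties of $W$ by the identical adjacency bookkeeping. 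These two maps are visibly mutually inverse, yielding the bijection.

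Finally, since this correspondence satisfies $|W|=r\,|S|$, I would conclude
\[
\id(\exp(G,r),x)=\sum_{W}x^{|W|}=\sum_{S}x^{r|S|}=\sum_{S}(x^r)^{|S|}=\id(G,x^r),
\]
where the sums range over the independent dominating sets of $\exp(G,r)$ and of $G$ respectively, which is the desired identity.
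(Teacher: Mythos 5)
Your proposal is correct and follows essentially the same route as the paper: the bijection in which an independent dominating set $S$ of $G$ corresponds to the union $\bigcup_{v\in S} I_v$ in $\exp(G,r)$, with the all-or-nothing intersection property forcing every independent dominating set of $\exp(G,r)$ to arise this way. Your write-up merely makes explicit (via the contradiction argument for the structural lemma) what the paper's proof asserts more tersely, so there is no substantive difference.
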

\begin{proof}
Let $W\subseteq V$ be an independent dominating set in $G$. Then in $\exp(G,r)$ all vertices in $I_w$, for $w\in W$, must be dominating and all vertices in $I_u$, for $u\in V-W$, are non-dominating (because of the complete bipartite graphs between vertices in $I_w$ and $I_u$). Therefore, every independent dominating set in $G$ can be expanded to exactly one independent dominating set in $\exp(G,r)$ and vice versa.
\end{proof}

Kotek et al. proved in \cite{Kotek2013a} that the alternating sum over the domination polynomials of the vertex induced subgraphs equals $1+(-x)^n$. In contrast to this result the alternating sum over the independent domination polynomials equals one.
\begin{theorem}\label{theorem::id-suminducedsubgraphs}
Let $G=(V,E)$ be a connected graph with at least two vertices. Then
\begin{equation*}
\sum \limits_{W\subseteq V} (-1)^{|W|} \id(G[W],x) = 1.
\end{equation*}
\end{theorem}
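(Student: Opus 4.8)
The plan is to expand the left-hand side using the definition of $\id$, interchange the order of summation so that the outer sum runs over the independent sets $S$ that are being counted, and then collapse the resulting inner sum by a standard sign-alternating (inclusion--exclusion) cancellation.

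First I would write
\[
\sum_{W \subseteq V} (-1)^{|W|} \id(G[W],x)
= \sum_{W \subseteq V} (-1)^{|W|} \sum_{\substack{S \subseteq W \\ S \text{ ind. dom. in } G[W]}} x^{|S|}
\]
and swap the two sums, grouping the terms by $S$. The point that makes the interchange tractable is a clean description of which pairs $(S,W)$ occur. Since adjacency in $G[W]$ agrees with adjacency in $G$, a set $S \subseteq W$ is an independent dominating set of $G[W]$ precisely when $S$ is independent in $G$ and every vertex of $W \setminus S$ lies in $N_G(S)$. This last condition is equivalent to the double inclusion $S \subseteq W \subseteq N_G[S]$.

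Next, for a fixed independent set $S$, I would evaluate the inner sum over all admissible $W$. Writing $W = S \cup T$ with $T \subseteq A$, where $A = N_G[S] \setminus S$, and using $|W| = |S| + |T|$, the inner sum factors as $(-1)^{|S|}\sum_{T \subseteq A}(-1)^{|T|}$. By the binomial identity $\sum_{T \subseteq A}(-1)^{|T|} = (1-1)^{|A|}$, this vanishes unless $A = \emptyset$, that is, unless $N_G[S] = S$.

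Finally I would identify the surviving terms. Because $S$ is independent we have $N_G(S) \cap S = \emptyset$, so $N_G[S] = S$ forces $N_G(S) = \emptyset$; in other words $S$ consists only of isolated vertices of $G$. This is exactly where the hypothesis enters: a connected graph on at least two vertices has no isolated vertex, so the only independent set with empty neighbourhood is $S = \emptyset$, which contributes $x^{0}\,(-1)^{0} = 1$ and yields the claimed value. The main obstacle is the second step --- pinning down precisely which $W$ realise a given $S$ as an independent dominating set of $G[W]$ --- together with making sure the alternating collapse is applied over the correct index set $A$; once the range $S \subseteq W \subseteq N_G[S]$ is established, the remainder is immediate.
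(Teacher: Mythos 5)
Your proof is correct and follows essentially the same route as the paper's: expand the definition, interchange the sums, observe that the admissible $W$ are exactly those with $S \subseteq W \subseteq N_G[S]$, and collapse the inner alternating sum over subsets of $N_G(S)$, leaving only $S=\emptyset$ since a connected graph on at least two vertices has no isolated vertex. If anything, your write-up is slightly cleaner than the paper's, which phrases the surviving case somewhat loosely as ``$U$ not equal $V$ or $\emptyset$'' where the precise condition is $N_G(U)=\emptyset$, exactly as you state it.
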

\begin{proof}
First of all we insert the definition of the independent domination polynomial in the equation and change the order of the summation.
\allowdisplaybreaks{
\begin{align}
\sum \limits_{W\subseteq V} (-1)^{|W|} \id(G[W],x) =& \sum \limits_{W\subseteq V} (-1)^{|W|} \sum \limits_{\substack{U\subseteq W\\N_{G[W]}[U]=W\\\iso(G[U])=|U|}} x^{|U|}\notag\\
=& \sum \limits_{\substack{U\subseteq V\\\iso(G[U])=|U|}} x^{|U|} \sum \limits_{\substack{W: U\subseteq W\\N_{G[W]}[U]=W}} (-1)^{|W|}\notag\\
=& \sum \limits_{\substack{U\subseteq V\\\iso(G[U])=|U|}} x^{|U|} \sum \limits_{W: U\subseteq W \subseteq N_{G[W]}[U]} (-1)^{|W|}\label{eq::int1}\\
=& \sum \limits_{\substack{U\subseteq V\\\iso(G[U])=|U|}} x^{|U|} \sum \limits_{W: U\subseteq W \subseteq N_{G}[U]} (-1)^{|W|}\label{eq::int2}\\
=& \sum \limits_{\substack{U\subseteq V\\\iso(G[U])=|U|}} (-x)^{|U|} \sum \limits_{Y \subseteq N_{G}(U)} (-1)^{|Y|}\notag\\
=& 1\notag.
\end{align}}
In Equation (\ref{eq::int1}) we sum over all vertex subsets $W$ such that $U$ is an independent dominating set in $G[W]$. The condition $W \subseteq N_{G[W]}[U]$ in Equation (\ref{eq::int1}) guarantees that we sum only over subsets $W$ such that $U$ is a dominating set in $G[W]$. Hence, in the inner sum $W$ can be every subset from $N_G[U]$. With these considerations we obtain Equation (\ref{eq::int2}). Because of the fact that $U$ is included in every subset $W$ of the inner sum, the summation is performed only over vertex subsets included in $N_G(U)$ and $(-1)^{|U|}$ is factored out from the inner sum. The second sum vanishes for every set $U$ which is not equal $V$ or $\emptyset$ and therefore we obtain the theorem.
\end{proof}

\begin{remark}\label{remark::id-suminducedsubgraphs}
Let $G=(\{v\},\emptyset)$ be a graph with one vertex. Then
\begin{equation*}
\sum \limits_{W\subseteq V} (-1)^{|W|} \id(G[W],x) = 1-x.
\end{equation*}
\end{remark}

If the graph has more than one component we get as a consequence of Theorem \ref{theorem::id-suminducedsubgraphs} and Remark \ref{remark::id-suminducedsubgraphs} the following corollary.
\begin{corollary}\label{corollary::id-suminducedsubgraphs}
Let $G=(V,E)$ be a graph. Then
\begin{equation*}
\sum \limits_{W\subseteq V} (-1)^{|W|} \id(G[W],x) = (1-x)^{\iso(G)}.
\end{equation*}
\end{corollary}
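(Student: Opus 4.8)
The plan is to reduce the general case to the two facts already established: Theorem \ref{theorem::id-suminducedsubgraphs}, which handles a connected graph on at least two vertices, and Remark \ref{remark::id-suminducedsubgraphs}, which handles a single isolated vertex. The key observation is that the quantity $\sum_{W\subseteq V}(-1)^{|W|}\id(G[W],x)$ is \emph{multiplicative over the components of $G$}. Indeed, if $G$ is the disjoint union of $G_1$ and $G_2$ with vertex sets $V_1,V_2$, then every $W\subseteq V$ splits uniquely as $W=W_1\cup W_2$ with $W_i\subseteq V_i$, the induced subgraph decomposes as $G[W]=G_1[W_1]\cup G_2[W_2]$, and by Lemma \ref{lemma::id-twocomponents} we have $\id(G[W],x)=\id(G_1[W_1],x)\,\id(G_2[W_2],x)$. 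Since moreover $(-1)^{|W|}=(-1)^{|W_1|}(-1)^{|W_2|}$, the sum factors as a product of the corresponding sums over $G_1$ and $G_2$.

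First I would state and justify this multiplicativity, most cleanly by a short induction on the number of components: if $F(G,x)$ denotes the alternating sum in question, then the factorization above gives $F(G,x)=F(G_1,x)\,F(G_2,x)$ whenever $G=G_1\cup G_2$ is a disjoint union. Iterating, if $G$ has components $C_1,\dots,C_k$ then $F(G,x)=\prod_{j=1}^{k}F(C_j,x)$.

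Next I would evaluate each factor. Each component $C_j$ is connected, so exactly one of two cases applies: either $C_j$ has at least two vertices, in which case Theorem \ref{theorem::id-suminducedsubgraphs} gives $F(C_j,x)=1$; or $C_j$ is a single isolated vertex, in which case Remark \ref{remark::id-suminducedsubgraphs} gives $F(C_j,x)=1-x$. The number of components of the latter type is precisely $\iso(G)$, the number of isolated vertices. Substituting these values into the product, every nontrivial component contributes a factor of $1$ and each isolated vertex contributes a factor of $1-x$, yielding $F(G,x)=(1-x)^{\iso(G)}$, as claimed.

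I do not expect any real obstacle here; the proof is essentially bookkeeping once multiplicativity is in hand. The only point requiring a little care is the multiplicativity step itself, and specifically that $\id(G[W],x)=\id(G_1[W_1],x)\,\id(G_2[W_2],x)$: this needs the fact that $G_1[W_1]$ and $G_2[W_2]$ are themselves the components of $G[W]$ (no edges run between $V_1$ and $V_2$), so that Lemma \ref{lemma::id-twocomponents} applies directly. With that verified the rest follows by separating the index set $W\subseteq V$ into its coordinates and collecting terms.
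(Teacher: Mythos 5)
Your proof is correct and follows exactly the route the paper intends: the paper derives this corollary directly from Theorem \ref{theorem::id-suminducedsubgraphs} and Remark \ref{remark::id-suminducedsubgraphs}, with the component-wise factorization of the alternating sum (via Lemma \ref{lemma::id-twocomponents}) as the implicit glue, which you have simply made explicit. Your added care that $G_1[W_1]$ and $G_2[W_2]$ carry all edges of $G[W]$ is the right detail to check, and the rest is the same bookkeeping the paper leaves to the reader.
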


Applying M\"obius inversion to Corollary \ref{corollary::id-suminducedsubgraphs} gives the next corollary.
\begin{corollary}\label{corollary::id-alternatingsum}
Let $G=(V,E)$ be a graph. Then
\begin{equation*}
\id(G,x) = \sum \limits_{W\subseteq V} (-1)^{|W|} (1-x)^{\iso(G[W])}.
\end{equation*}
\end{corollary}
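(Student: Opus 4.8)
The plan is to read Corollary~\ref{corollary::id-suminducedsubgraphs} as a relation between two set functions on the Boolean lattice of subsets of $V$ and then invert it by M\"obius inversion. The first observation I would make is that the corollary is not special to $G$ itself: applying it to the induced subgraph $G[S]$ for an arbitrary $S\subseteq V$, and using $G[S][W]=G[W]$ for $W\subseteq S$, yields
\[
\sum_{W\subseteq S}(-1)^{|W|}\id(G[W],x)=(1-x)^{\iso(G[S])}.
\]
Thus if I define the set functions $F(S)=\id(G[S],x)$ and $H(S)=(1-x)^{\iso(G[S])}$, the corollary states exactly that $H(S)=\sum_{W\subseteq S}(-1)^{|W|}F(W)$ for every $S\subseteq V$.

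The second step is to bring this into the standard summatory form to which M\"obius inversion applies. Writing $\tilde F(W)=(-1)^{|W|}F(W)$, the relation becomes $H(S)=\sum_{W\subseteq S}\tilde F(W)$, which is precisely the hypothesis of M\"obius inversion over the Boolean lattice, whose M\"obius function is $\mu(W,S)=(-1)^{|S|-|W|}$. Inverting gives $\tilde F(S)=\sum_{W\subseteq S}(-1)^{|S|-|W|}H(W)$, hence
\[
(-1)^{|S|}\id(G[S],x)=\sum_{W\subseteq S}(-1)^{|S|-|W|}(1-x)^{\iso(G[W])}.
\]

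The final step is pure sign bookkeeping, which I expect to be the only place needing care: multiplying both sides by $(-1)^{|S|}$ and using $(-1)^{-|W|}=(-1)^{|W|}$ collapses the right-hand side to $\sum_{W\subseteq S}(-1)^{|W|}(1-x)^{\iso(G[W])}$, so that $\id(G[S],x)=\sum_{W\subseteq S}(-1)^{|W|}(1-x)^{\iso(G[W])}$, and specializing to $S=V$ gives the claimed identity. No genuine obstacle arises beyond tracking the two competing copies of $(-1)^{|W|}$ correctly; the entire content of the argument is carried by Corollary~\ref{corollary::id-suminducedsubgraphs} together with Boolean-lattice M\"obius inversion, once one notices that the corollary transfers verbatim to every induced subgraph.
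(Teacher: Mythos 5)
Your proof is correct and takes essentially the same route as the paper, which derives Corollary~\ref{corollary::id-alternatingsum} simply by applying M\"obius inversion over the Boolean lattice to Corollary~\ref{corollary::id-suminducedsubgraphs}. Your write-up merely makes explicit the step the paper leaves implicit --- that the summatory relation must hold for every induced subgraph $G[S]$ (which is why one needs the Corollary for general graphs rather than Theorem~\ref{theorem::id-suminducedsubgraphs} for connected ones) --- and the sign bookkeeping is handled correctly.
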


The previous corollary gives us a formula to calculate the coefficients of the independent domination polynomial.
\begin{corollary}
Let $G=(V,E)$ be a graph with $n$ vertices. Then
\begin{equation*}
\id(G,x) = \sum \limits_{k=0}^{n} x^k \sum \limits_{\substack{W\subseteq V\\\iso(G[W])\geq k}} (-1)^{|W|+k} \binom{\iso(G[W])}{k}.
\end{equation*}
\end{corollary}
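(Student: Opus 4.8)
The plan is to start from the closed form in Corollary~\ref{corollary::id-alternatingsum} and expand each power $(1-x)^{\iso(G[W])}$ with the binomial theorem, and then interchange the two summations so as to read off the coefficient of each power $x^k$.

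First I would write, for a fixed $W \subseteq V$ and with the abbreviation $m = \iso(G[W])$,
\begin{equation*}
(1-x)^{m} = \sum_{k=0}^{m} \binom{m}{k} (-x)^k = \sum_{k=0}^{\iso(G[W])} (-1)^k \binom{\iso(G[W])}{k} x^k.
\end{equation*}
Substituting this into the identity of Corollary~\ref{corollary::id-alternatingsum} turns the right-hand side into the double sum
\begin{equation*}
\id(G,x) = \sum_{W \subseteq V} (-1)^{|W|} \sum_{k=0}^{\iso(G[W])} (-1)^k \binom{\iso(G[W])}{k} x^k .
\end{equation*}

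Next I would interchange the order of summation. Since $\iso(G[W]) \le |W| \le n$, the inner index $k$ never exceeds $n$, so after the swap $k$ ranges over $0,\dots,n$; for a fixed such $k$ the sets $W$ that contribute are exactly those with $\iso(G[W]) \ge k$. For sets with $\iso(G[W]) < k$ the binomial coefficient $\binom{\iso(G[W])}{k}$ vanishes, so imposing the condition $\iso(G[W]) \ge k$ adds nothing and is harmless. Finally, collecting the sign factors by $(-1)^{|W|}(-1)^k = (-1)^{|W|+k}$ yields precisely the claimed formula.

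Since every step is an elementary rewriting of a finite sum together with a reordering, I do not expect a genuine obstacle here. The only point requiring a little care is the bookkeeping of the summation ranges after the interchange, namely justifying both the outer limit $k=n$ (via $\iso(G[W]) \le n$) and the replacement of the upper summation bound $\iso(G[W])$ by the condition $\iso(G[W]) \ge k$ using the vanishing of the binomial coefficient.
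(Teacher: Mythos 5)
Your proposal is correct and follows essentially the same route as the paper's proof: expand $(1-x)^{\iso(G[W])}$ by the binomial theorem, interchange the two sums, and restrict to $W$ with $\iso(G[W])\geq k$ since $\binom{\iso(G[W])}{k}$ vanishes otherwise. Your extra remark justifying the outer bound $k\leq n$ via $\iso(G[W])\leq |W|\leq n$ is a detail the paper leaves implicit, but the argument is identical in substance.
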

\begin{proof}
Using the Corollary \ref{corollary::id-alternatingsum}, we get
\begin{align*}
\id(G,x) &= \sum \limits_{W\subseteq V} (-1)^{|W|} (1-x)^{\iso(G[W])}\\
&= \sum \limits_{W\subseteq V} (-1)^{|W|} \sum \limits_{k=0}^{\iso(G[W])} \binom{\iso(G[W])}{k} (-x)^k\\
&= \sum \limits_{k=0}^{n} (-x)^k \sum \limits_{W\subseteq V} (-1)^{|W|} \binom{\iso(G[W])}{k}\\
&= \sum \limits_{k=0}^{n} x^k \sum \limits_{\substack{W\subseteq V\\\iso(G[W])\geq k}} (-1)^{|W|+k} \binom{\iso(G[W])}{k}.
\end{align*}
\end{proof}

We can use these results to prove a theorem which offers a fast way to calculate the independent domination polynomial. This theorem uses the \emph{i}-essential sets of a graph. The concept of essential sets of a graph was introduced by Kotek, Preen, and Tittmann \cite{Kotek2013a} for the calculation of the domination polynomial.
\begin{definition}
Let $G=(V,E)$ be a graph and $W$ a vertex subset of the graph. The set $W$ is called \emph{i}-essential if $W$ contains the open neighborhood of at least one vertex of $V\backslash W$. We denote by $\essi(G)$ the family of \emph{i}-essential sets of $G$, in formula:
\begin{equation*}
\essi(G) = \{X\subseteq V: \exists v\in V\backslash X: N(v) \subseteq X\}.
\end{equation*}
\end{definition}

An open problem concerning \emph{i}-essential sets is: Can the number of \emph{i}-essential sets of a given graph be calculated, without calculating the sets themselves? The next lemma gives two basic properties of the \emph{i}-essential sets, which may be helpful to get a better understanding of these sets.
\begin{lemma}
Let $G=(V,E)$ be a graph. Then
\begin{equation*}
\min_{W\in \essi(G)} \{|W|\} = \delta(G)
\end{equation*}
and
\begin{equation*}
N(v) \in \essi(G) \quad \forall v \in V.
\end{equation*}
\end{lemma}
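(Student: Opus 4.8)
The statement claims two facts about $i$-essential sets. The plan is to prove each separately, since they concern different structural features of $\essi(G)$.

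For the first equation, $\min_{W\in\essi(G)}\{|W|\}=\delta(G)$, I would argue by showing both inequalities. First I would establish the lower bound: if $W\in\essi(G)$, then by definition there exists some $v\in V\backslash W$ with $N(v)\subseteq W$, so $|W|\geq |N(v)|=\deg(v)\geq\delta(G)$. Since this holds for every $i$-essential set, the minimum cardinality is at least $\delta(G)$. For the reverse inequality I would exhibit an $i$-essential set of size exactly $\delta(G)$: pick a vertex $v$ of minimum degree $\delta(G)$ and consider $W=N(v)$. Then $v\notin W$ (the graph is simple, so $v\notin N(v)$) and $N(v)\subseteq W=N(v)$ trivially, so $W$ is $i$-essential with $|W|=\delta(G)$. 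Combining the two bounds gives equality. I would need to be slightly careful about whether an isolated vertex ($\delta(G)=0$) is handled correctly: if $v$ has degree $0$, then $N(v)=\emptyset$, and $\emptyset\in\essi(G)$ witnessed by $v$ itself, which still fits the formula.

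The second equation, $N(v)\in\essi(G)$ for all $v\in V$, is essentially the construction already used above. I would simply observe that for any fixed $v\in V$, the set $W=N(v)$ satisfies the defining condition with the witness vertex being $v$ itself: since $G$ is simple we have $v\notin N(v)$, hence $v\in V\backslash W$, and trivially $N(v)\subseteq N(v)=W$. Therefore $W=N(v)$ is $i$-essential.

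I do not anticipate a serious obstacle here, as both parts follow almost directly from the definition of $i$-essential set. The only subtlety worth flagging is the degenerate cases involving isolated vertices and the empty set, which I would check explicitly to make sure the minimum in the first equation is well-defined (the family $\essi(G)$ is nonempty precisely because the second equation guarantees $N(v)\in\essi(G)$, so the minimum is always attained). The main content is recognizing that the open-neighborhood sets $N(v)$ are simultaneously always $i$-essential and, for a minimum-degree vertex, of minimum possible cardinality among $i$-essential sets.
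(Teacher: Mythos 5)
Your proof is correct: the lower bound $|W|\geq\deg(v)\geq\delta(G)$ for any $W\in\essi(G)$ with witness $v$, the construction $W=N(v)$ for a minimum-degree vertex $v$ (using $v\notin N(v)$ in a simple graph), and the observation that the same construction proves $N(v)\in\essi(G)$ for every $v$ are all exactly right, and your attention to the isolated-vertex case and to the nonemptiness of $\essi(G)$ is sound. The paper states this lemma without any proof at all, and your direct argument from the definition is precisely the intended one, with the key insight being that the open neighborhoods $N(v)$ are always $i$-essential and achieve the minimum cardinality at a vertex of minimum degree.
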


\begin{remark}
If $W$ is an independent dominating set of the graph $G$ and $W\subset U$, then $U$ is not an independent dominating set. Let $S$ be the partial ordered set $(\mathcal{P}(V), \subseteq)$, then the set of the independent dominating sets of $G$ is an anti-chain in $S$.
\end{remark}

\begin{theorem}
Let $G=(V,E)$ be a graph with $n$ vertices, then
\begin{equation*}
\id(G,x) = (-1)^n \sum \limits_{U\subseteq \essi(G)} (-1)^{|U|} \left((1-x)^{|\{v\in V\backslash U|N_G(v)\subseteq U\}|}-1\right).
\end{equation*}
\end{theorem}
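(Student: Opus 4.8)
The plan is to derive the identity from Corollary~\ref{corollary::id-alternatingsum}, which already expresses $\id(G,x)$ as an alternating sum over all vertex subsets. The main idea is to reindex that sum by complementation and then to recognise the $i$-essential sets as precisely those subsets contributing a nontrivial power of $(1-x)$.

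First I would start from
\[
\id(G,x) = \sum_{W\subseteq V} (-1)^{|W|} (1-x)^{\iso(G[W])}
\]
and substitute $U = V\backslash W$. Since $|W| = n - |U|$, the sign splits as $(-1)^{|W|} = (-1)^n (-1)^{|U|}$, producing the global factor $(-1)^n$. The next step is to rewrite the exponent $\iso(G[W])$ in terms of $U$. A vertex $v\in W = V\backslash U$ is isolated in $G[W]$ exactly when it has no neighbour inside $W$, that is when $N_G(v)\cap(V\backslash U)=\emptyset$, equivalently $N_G(v)\subseteq U$. Hence $\iso(G[W]) = |\{v\in V\backslash U : N_G(v)\subseteq U\}|$, which is precisely the exponent appearing in the statement.

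The key observation is that this exponent is positive if and only if some vertex $v\in V\backslash U$ satisfies $N_G(v)\subseteq U$, which is exactly the defining condition $U\in\essi(G)$. Therefore every non-$i$-essential $U$ has vanishing exponent and contributes $(1-x)^0 = 1$, so I can split the sum over all $U\subseteq V$ into its essential and non-essential parts.

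The final step, the only genuinely delicate point, is producing the $-1$ inside the bracket. I would write each essential term as $(1-x)^{e(U)} = \bigl((1-x)^{e(U)}-1\bigr) + 1$, where $e(U)$ denotes the exponent above, so that the $+1$ from each essential set merges with the $1$ from each non-essential set into the single alternating sum $\sum_{U\subseteq V}(-1)^{|U|}$. For $n\geq 1$ this equals $(1-1)^n = 0$, so those contributions cancel and only the sum over $\essi(G)$ carrying the $-1$ correction remains, yielding the claimed formula. The main things to monitor are the sign bookkeeping under complementation and the legitimacy of regrouping the finite sum; it is the vanishing of $\sum_{U\subseteq V}(-1)^{|U|}$ that makes the $-1$ term emerge cleanly.
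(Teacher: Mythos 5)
Your proof is correct and follows essentially the same route as the paper: reindexing Corollary~\ref{corollary::id-alternatingsum} by the complementation $U=V\backslash W$, identifying $\iso(G[V\backslash U])$ with $|\{v\in V\backslash U : N_G(v)\subseteq U\}|$, and then restricting to the \emph{i}-essential sets. If anything you are more explicit than the paper at the final step, where the paper merely remarks that the bracketed factor vanishes for non-essential $U$; your observation that inserting the $-1$ is harmless because $\sum_{U\subseteq V}(-1)^{|U|}=(1-1)^n=0$ for $n\geq 1$ is exactly the justification that step requires.
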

\begin{proof}
We obtain by Corollary \ref{corollary::id-alternatingsum}:
\begin{align*}
\id(G,x) =& \sum \limits_{W\subseteq V} (-1)^{|W|} (1-x)^{\iso(G[W])}\\
=& \sum \limits_{U\subseteq V} (-1)^{|V\backslash U|} (1-x)^{\iso(G[V\backslash U])}\\
=& (-1)^n \sum \limits_{U\subseteq V} (-1)^{|U|} (1-x)^{|\{v\in V\backslash U|N_G(v)\subseteq U\}|}\\
=& (-1)^n \sum \limits_{U\subseteq \essi(G)} (-1)^{|U|} \left((1-x)^{|\{v\in V\backslash U|N_G(v)\subseteq U\}|}-1\right).
\end{align*}

The second factor in the sum vanishes if and only if $\{v\in V\backslash U:N_G(v)\subseteq U\}=\emptyset$. Consequently, only \emph{i}-essential sets contribute to the sum.
\end{proof}

\section{Recurrence equations}\label{section::recurrenceEquations}
In this section we prove several recurrence equations for the independent domination polynomial. We need the following seven graph operations:
\begin{itemize}
\item $G-v$ denotes the graph obtained from $G$ by removal of the vertex $v\in V$ and all edges incident with $v$.
\item $G/v$ denotes the graph obtained from $G$ by the removal of the vertex $v\in V$ and the addition of edges between any pair of non-adjacent neighbors of $v$.
\item $G\odot v$ denotes the graph obtained from $G$ by removing all edges between adjacent vertices of $v\in V$.
\item $G\circ v$ denotes the graph obtained from $G$ by removing $v$ and the addition of loops to all neighbors of $v$.
\item $G-N[v]$ denotes the graph $G-N_G[v]$ obtained by deleting all of the vertices in the closed neighborhood of the vertex $v$ and the edges incident to them.
\item $G-e$ denotes the graph obtained from $G$ by removing the edge $e\in E$.
\item $G\circ v$ denotes the graph obtained from $G$ by removing $v$ and adding a loop to every neighbor of $v$.
\end{itemize}

A loop in the context of domination means that the vertex dominates itself. If a vertex $v$ has a loop, then $v\in N(v)$ and hence $v$ can not be in any independent dominating set. Hence, $\id(G\circ v,x)$ is the independent domination polynomial of the graph $G-v$ under the condition that no vertex in $N(v)$ is dominating.

\begin{remark}
Let $G=(V,E)$ be a graph and $v\in V$. Then
\begin{equation*}
\id((G\odot v)\circ v,x) = \id(G\circ v,x).
\end{equation*}
\end{remark}

\begin{remark}\cite{Kotek2012}
Let $G=(V,E)$ be a graph and $v\in V$. Then
\begin{equation}\label{eqn::basics-operations-facts}
(G\odot v) - N[v] \cong G-N[v].
\end{equation}
\end{remark}

\begin{theorem}\label{theorem::id-recurrence}
Let $G=(V,E)$ be a graph and $v$ a vertex of the graph. Then
\begin{equation*}
\id(G,x) = \id(G-v,x) - \id(G\circ v,x) + x \id(G-N[v],x).
\end{equation*}
\end{theorem}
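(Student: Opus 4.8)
The plan is to give a direct combinatorial proof by partitioning the family of independent dominating sets of $G$ according to the status of the distinguished vertex $v$. Writing $\id(G,x)=\sum_{W}x^{|W|}$ where $W$ ranges over the independent dominating sets of $G$, I would split this sum into the sets with $v\in W$ and those with $v\notin W$, and show that the first class accounts for the term $x\,\id(G-N[v],x)$, while the second class accounts for $\id(G-v,x)-\id(G\circ v,x)$.

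First I would treat the sets containing $v$. If $v\in W$, then the isolation condition $\iso(G[W])=|W|$ forces $W\cap N(v)=\emptyset$, so $W\setminus\{v\}\subseteq V\setminus N[v]$; moreover $v$ already dominates all of $N[v]$, so the remaining vertices $V\setminus N[v]$ must be dominated by $W\setminus\{v\}$ using edges lying entirely inside the subgraph induced on $V\setminus N[v]$ (no vertex of $N(v)$ is available to help). Conversely, any independent dominating set of $G-N[v]$ becomes, after adjoining $v$, an independent dominating set of $G$ containing $v$. This bijection, together with the fact that adding $v$ raises $|W|$ by one, contributes exactly $x\,\id(G-N[v],x)$.

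Next I would handle the sets with $v\notin W$. Such a $W$ satisfies the isolation condition in $G$ if and only if it does so in $G-v$, and it dominates $V$ if and only if it dominates $V\setminus\{v\}$ and, in addition, dominates $v$; since $v\notin W$, the latter means $W\cap N(v)\neq\emptyset$. Hence the independent dominating sets of $G$ avoiding $v$ are precisely the independent dominating sets of $G-v$ that meet $N(v)$. I would then write their generating function as $\id(G-v,x)$ minus the generating function of those independent dominating sets of $G-v$ that avoid $N(v)$ entirely.

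The last and most delicate step is to identify this leftover generating function with $\id(G\circ v,x)$. Using the interpretation of $G\circ v$ recorded just before the theorem, every vertex of $N(v)$ carries a loop and therefore can never belong to an independent dominating set, so any independent dominating set of $G\circ v$ automatically avoids $N(v)$; and on the sets avoiding $N(v)$ the loops are irrelevant both to the condition $\iso(G[W])=|W|$ and to domination, since a loop dominates only its own vertex, which is excluded from $W$. Thus the independent dominating sets of $G\circ v$ coincide with the independent dominating sets of $G-v$ that avoid $N(v)$, and their generating function equals $\id(G\circ v,x)$. Combining the two classes yields the claimed identity. I expect the main obstacle to be exactly this loop bookkeeping: one must verify that introducing the loops removes precisely the neighbors of $v$ from eligibility without altering which of the remaining vertices are dominated, and that the degenerate case $N(v)=\emptyset$, in which $v$ must lie in every independent dominating set, is handled consistently.
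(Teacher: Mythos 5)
Your proof is correct and follows essentially the same route as the paper: partition the independent dominating sets by whether $v$ belongs to them, obtain $x\,\id(G-N[v],x)$ for the sets containing $v$, and realize the remaining sets as the independent dominating sets of $G-v$ meeting $N(v)$, with the subtracted term $\id(G\circ v,x)$ accounting for those avoiding $N(v)$ via the loop interpretation. The only difference is that you spell out the bijections and the loop bookkeeping (including the case $N(v)=\emptyset$) explicitly, where the paper argues more tersely.
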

\begin{proof}
If the vertex $v$ is dominating, then it dominates all vertices in the neighborhood and these vertices can not be dominating. This case will be counted by $x \id(G-N[v],x)$. If the vertex $v$ is not dominating, then at least one vertex in $N(v)$ must be dominating. The polynomial $\id(G-v,x)$ counts these independent dominating sets, but it counts also those sets where in $N(v)$ no vertex is dominating. Hence, we must subtract the polynomial for these cases to get the theorem.
\end{proof}

The next corollary follows directly from the last theorem and gives a recurrence equation under the condition that the neighborhood of the vertex $v$ has some special properties.
\begin{corollary}
Let $G=(V,E)$ be a graph, $u,v\in V$, $u\neq v$ and $N(u) = N(v)$. Then
\begin{equation*}
\id(G,x) = \id(G-v,x) + (x^2 -x) \id(G-N[v]-u,x).
\end{equation*}
\end{corollary}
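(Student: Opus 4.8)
The plan is to specialize the general recurrence of Theorem \ref{theorem::id-recurrence} to the vertex $v$ and then exploit the hypothesis $N(u)=N(v)$ to rewrite the two auxiliary terms $\id(G\circ v,x)$ and $x\,\id(G-N[v],x)$ in terms of the single polynomial $\id(G-N[v]-u,x)$. First I would record that $N(u)=N(v)$ forces $u$ and $v$ to be non-adjacent false twins: since $v\notin N(v)=N(u)$ and $u\notin N(u)=N(v)$, neither vertex lies in the other's neighborhood. In particular $u\notin N[v]$, so $u$ survives in both $G-N[v]$ and $G-N[v]-u$.

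Next I would analyze $\id(G-N[v],x)$. Deleting $N[v]=\{v\}\cup N(v)$ removes every neighbor of $u$, because $N(u)=N(v)$; hence $u$ becomes an isolated vertex of $G-N[v]$. An isolated vertex lies in every independent dominating set (it can only be dominated by itself), so by Lemma \ref{lemma::id-twocomponents}, treating $u$ as a one-vertex component with polynomial $x$, we obtain $\id(G-N[v],x)=x\,\id(G-N[v]-u,x)$.

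The key step is to evaluate $\id(G\circ v,x)$. Recall that $G\circ v$ carries a loop on every vertex of $N(v)$, so no vertex of $N(v)$ can lie in an independent dominating set. Since $N_{G\circ v}(u)=N(u)=N(v)$ consists entirely of looped vertices, the vertex $u$ cannot be dominated by any of its neighbors and must therefore be a dominating vertex in every contributing set. Placing $u$ in the set contributes a factor $x$ and dominates exactly $N(u)=N(v)$; removing $u$ together with its closed neighborhood $\{u\}\cup N(v)$ from the vertex set $V\backslash\{v\}$ of $G\circ v$ leaves precisely $V\backslash(\{u\}\cup N[v])$, and all loops (which sat on $N(v)$) are thereby deleted. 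Hence the residual graph is the honest simple graph $G-N[v]-u$, giving $\id(G\circ v,x)=x\,\id(G-N[v]-u,x)$.

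Finally, substituting both identities into Theorem \ref{theorem::id-recurrence} yields
\[
\id(G,x)=\id(G-v,x)-x\,\id(G-N[v]-u,x)+x\cdot x\,\id(G-N[v]-u,x),
\]
which collapses to $\id(G,x)=\id(G-v,x)+(x^2-x)\,\id(G-N[v]-u,x)$, as claimed. I expect the main obstacle to be the bookkeeping in the $G\circ v$ term: one must justify that $u$ is \emph{forced} into the set and that deleting $u$ and its neighborhood in the looped graph genuinely returns the loop-free graph $G-N[v]-u$, rather than a graph still carrying residual loops.
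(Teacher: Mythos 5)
Your proof is correct and follows essentially the same route as the paper: specialize Theorem \ref{theorem::id-recurrence} to $v$, use that $N(u)=N(v)$ makes $u$ isolated in $G-N[v]$ to get $x\id(G-N[v],x)=x^2\id(G-N[v]-u,x)$, and identify $\id(G\circ v,x)=x\,\id(G-N[v]-u,x)$. If anything, you are more careful than the paper on the last identity (the forcing of $u$ into every independent dominating set of $G\circ v$ and the check that the residual graph is loop-free), which the paper asserts in a single terse sentence.
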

\begin{proof}
Let $u$ and $v$ two vertices of the graph with $N(u) = N(v)$, then in the graph $G-N[v]$ the vertex $u$ is isolated. Hence, $u$ is included in all independent dominating sets of $G-N[v]$. The polynomial $x\id(G-N[v]-u,x)$ counts this case and therefore $x\id(G-N[v]-u,x) = \id(G\circ v,x)$.
\end{proof}

If we use the $\odot$-operation we can prove the following theorem.
\begin{theorem}\label{theorem::id-recurrence2}
Let $G=(V,E)$ be a graph and $v\in V$. Then
\begin{equation*}
\id(G,x) = \id(G-v,x) + \id(G\odot v,x) - \id(G\odot v -v,x).
\end{equation*}
\end{theorem}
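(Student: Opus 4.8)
The plan is to derive the identity by applying the vertex recurrence of Theorem~\ref{theorem::id-recurrence} twice at the same vertex $v$: once to the graph $G$ itself and once to the modified graph $G\odot v$. Subtracting the two instances should annihilate the awkward $\id(G\circ v,x)$ and $x\,\id(G-N[v],x)$ terms and leave precisely the claimed three-term relation, so no direct combinatorial count is required.

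First I would record Theorem~\ref{theorem::id-recurrence} applied to $G$ at $v$,
\[
\id(G,x) = \id(G-v,x) - \id(G\circ v,x) + x\,\id(G-N[v],x),
\]
and then the same theorem applied to $G\odot v$ at $v$,
\[
\id(G\odot v,x) = \id((G\odot v)-v,x) - \id((G\odot v)\circ v,x) + x\,\id((G\odot v)-N[v],x).
\]
The central step is to simplify the two auxiliary terms in the second identity using the preceding remarks. By the first remark, $\id((G\odot v)\circ v,x)=\id(G\circ v,x)$, and by Equation~(\ref{eqn::basics-operations-facts}) we have $(G\odot v)-N[v]\cong G-N[v]$, so $\id((G\odot v)-N[v],x)=\id(G-N[v],x)$; moreover $(G\odot v)-v$ is by definition the graph $G\odot v-v$. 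Substituting yields
\[
\id(G\odot v,x) = \id(G\odot v-v,x) - \id(G\circ v,x) + x\,\id(G-N[v],x).
\]

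At this point both displayed identities contain the identical combination $-\id(G\circ v,x)+x\,\id(G-N[v],x)$. Subtracting the second from the first cancels this common part and gives
\[
\id(G,x)-\id(G-v,x) = \id(G\odot v,x)-\id(G\odot v-v,x),
\]
which rearranges directly to the statement of the theorem.

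The main obstacle is the simplification step: one must be certain that applying the recurrence to $G\odot v$ reproduces \emph{exactly} the same $\circ v$ and $-N[v]$ terms as for $G$. This is exactly what the two remarks guarantee, and the underlying reason is conceptual rather than computational: the operation $\odot v$ only deletes edges inside $N(v)$, and these edges are irrelevant once those vertices carry loops (a looped vertex can never belong to an independent dominating set) and disappear altogether once $N[v]$ is removed. Hence, aside from invoking these two facts, the argument is a pure algebraic cancellation, and I expect the verification of the term identifications to be the only point needing care.
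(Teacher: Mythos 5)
Your proof is correct and follows essentially the same route as the paper: both apply Theorem~\ref{theorem::id-recurrence} at $v$ to $G$ and to $G\odot v$, invoke the two remarks ($\id((G\odot v)\circ v,x)=\id(G\circ v,x)$ and $(G\odot v)-N[v]\cong G-N[v]$) to identify the auxiliary terms, and cancel by subtraction. Your write-up is, if anything, slightly more explicit about why the term identifications hold.
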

\begin{proof}
Applying the Equations (\ref{eqn::basics-operations-facts}) to Theorem \ref{theorem::id-recurrence} gives
\begin{align}
\id(G,x) - \id(G-v,x) =& x \id(G-N[v],x) - \id(G\circ v,x) \notag\\
=& x \id(G-N[v],x)- \id((G\odot v) \circ v,x)\label{eqn::id-proof-rec1}.
\end{align}

Now we apply Theorem \ref{theorem::id-recurrence} to the graph $G\odot v$
\begin{align}
\id(G\odot v,x) - \id((G\odot v)-v,x) =& x \id((G\odot v)-N[v],x) - \id((G\odot v)\circ v,x)\label{eqn::id-proof-rec2}.
\end{align}
Putting the Equations (\ref{eqn::id-proof-rec1}) and (\ref{eqn::id-proof-rec2}) together gives the theorem.
\end{proof}

It is also possible to prove a theorem which gives a recurrence equation for the deletion of an edge in the graph.
\begin{theorem}
Let $G=(V,E)$ be a graph and $e=\{u,v\}\in E$. Then
\begin{align*}
\id(G,x) =& \id(G-e,x) - x^2 \id(G-N[u,v],x)\\
&+ x \id(G\circ v - N[u],x) + x \id(G\circ u - N[v],x).
\end{align*}
\end{theorem}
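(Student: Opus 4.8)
The plan is to prove the identity by comparing the independent dominating sets of $G$ with those of $G-e$ and absorbing the discrepancy into the three remaining terms. Concretely, I would evaluate $\id(G,x)-\id(G-e,x)$ by classifying each vertex subset $W\subseteq V$ according to whether it is an independent dominating set of $G$, of $G-e$, of both, or of neither, and summing $\pm x^{|W|}$ accordingly. Two monotonicity observations control the comparison. Deleting $e=\{u,v\}$ can only enlarge the family of independent sets (an independent set of $G$ stays independent in $G-e$, and a set becomes newly independent only if its single forbidden pair was $e$), and it can only shrink the family of dominating sets, since $N_{G-e}[W]\subseteq N_G[W]$. Hence a set can leave the independent-dominating family only by losing domination, and enter it only by gaining independence; moreover, because $e$ is the only edge that changes, the sole vertices whose domination can be affected are $u$ and $v$, and the sole independence violation that can disappear is the pair $\{u,v\}$.

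With this, the symmetric difference splits into exactly three families. The sets counted by $G$ but not by $G-e$ lose domination upon removing $e$: either $v\in W$, $u\notin W$ and $v$ is the \emph{only} neighbour of $u$ in $W$ (family $A_1$), or the symmetric situation with $u$ and $v$ interchanged (family $A_2$). The sets counted by $G-e$ but not by $G$ are the newly independent ones, namely $u,v\in W$ with no other adjacent pair inside $W$, that still dominate $G-e$ (family $B$). This yields
\[
\id(G,x)-\id(G-e,x)=\textstyle\sum_{A_1}x^{|W|}+\sum_{A_2}x^{|W|}-\sum_{B}x^{|W|},
\]
and it remains to evaluate the three sums (here $N[u,v]:=N_G[u]\cup N_G[v]$).

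The family $B$ is easiest: writing $W=\{u,v\}\cup W'$, independence forces $W'\subseteq V\setminus N[u,v]$ and domination forces $W'$ to be an independent dominating set of the induced subgraph $G-N[u,v]$, so $\sum_{B}x^{|W|}=x^2\,\id(G-N[u,v],x)$. For $A_2$ I would write $W=\{u\}\cup W''$; the conditions force $W''\subseteq V\setminus N[u,v]$, independence of $W''$, and---since $u$ already dominates $N[u]$---that $W''$ dominate every vertex of $V\setminus N[u]$, where the neighbours of $v$ lying outside $N[u]$ need a genuine dominator in $W''$ but may not themselves belong to $W''$. This is precisely the independent-domination condition in $G\circ v-N[u]$: deleting $N[u]$ from $G\circ v$ leaves the vertex set $V\setminus N[u]$ carrying loops exactly on $N(v)\setminus N[u]$, and those loops encode that such vertices cannot be chosen yet must be dominated from outside. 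Hence $\sum_{A_2}x^{|W|}=x\,\id(G\circ v-N[u],x)$, and by the $u\leftrightarrow v$ symmetry $\sum_{A_1}x^{|W|}=x\,\id(G\circ u-N[v],x)$; substituting the three evaluations and rearranging gives the stated recurrence.

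The step I expect to be the main obstacle is the correct reading of $G\circ v-N[u]$ and the verification that its independent dominating sets biject with the sets $W''$ produced by $A_2$. The delicate point is the loop semantics used throughout the section: a looped vertex is barred from the chosen set yet still requires an external dominator, and I must confirm that exactly the vertices of $N(v)\setminus N[u]$ (the neighbours of $v$ that $u$ fails to cover) carry loops, thereby reproducing the constraint that $v$ is dominated \emph{only} through $e$, while the genuinely free vertices are precisely those of $V\setminus N[u,v]$. Getting the surviving domain $V\setminus N[u]$, the loop set $N(v)\setminus N[u]$, and the reduced domination requirement to align simultaneously---and checking that this matches the $A_2$ conditions on the nose---is where the bookkeeping must be done carefully.
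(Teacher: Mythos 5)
Your proposal is correct and takes essentially the same route as the paper's proof: both compare the independent dominating sets of $G$ and $G-e$ through the status of $u$ and $v$, subtracting the sets with $u,v\in W$ (giving $x^2\id(G-N[u,v],x)$) and adding the two families in which exactly one of $u,v$ dominates while the other's neighbourhood contains no other chosen vertex (giving the $x\id(G\circ v-N[u],x)$ and $x\id(G\circ u-N[v],x)$ terms). Your write-up merely makes explicit the symmetric-difference bookkeeping and the loop semantics (a looped vertex is excluded from the set but still needs an external dominator) that the paper's shorter argument leaves implicit.
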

\begin{proof}
Every independent dominating set from $G$ will be an independent dominating set in $G-e$, except of those sets where $u$ or $v$ are dominating. $u$ and $v$ can be dominating in $G-e$, but not in $G$. Therefore, we must subtract $x^2 \id(G-N[u,v],x)$. Suppose now that only one of these two vertices are dominating and no vertex in the neighborhood of the other vertex is dominating. This situation will be counted in the graph $G$ but not in the graph $G-e$. Hence, we must add the polynomial for this case and the theorem follows. Remark that $x \id(G\circ v - N[u],x)$ is the independent domination polynomial under the condition that the vertex $u$ is dominating and no vertex in the neighborhood of $v$ (except $u$) is dominating.
\end{proof}

\begin{corollary}
Let $G=(V,E)$ be a graph, $e=\{u,v\}$ be an edge of the graph and $N[u]=N[v]$. Then
\begin{equation*}
\id(G,x) = \id(G-e,x) + (2x-x^2)\id(G-N[u],x).
\end{equation*}
\end{corollary}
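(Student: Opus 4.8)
The plan is to invoke the preceding edge-deletion theorem and to simplify each of its three correction terms under the hypothesis $N[u]=N[v]$. Writing $N[u,v]$ for the closed neighborhood of the set $\{u,v\}$, I first observe that
\[
N[u,v] = N[u]\cup N[v] = N[u],
\]
so the term $-x^2\id(G-N[u,v],x)$ immediately becomes $-x^2\id(G-N[u],x)$. It therefore remains only to rewrite the two $\circ$-terms.

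The crucial step is the claim that $G\circ v - N[u] = G-N[u]$. Recall that $G\circ v$ is obtained by deleting $v$ and placing a loop on every vertex of $N(v)$. Since $N(v)\subseteq N[v]=N[u]$, every vertex that receives a loop lies in $N[u]$ and is thus deleted when we remove $N[u]$; consequently no loop survives the deletion. Moreover $v\in N[v]=N[u]$, so the surviving vertex set is $(V\setminus\{v\})\setminus N[u] = V\setminus N[u]$, which is exactly the vertex set of $G-N[u]$, and the induced edges coincide. Hence $\id(G\circ v - N[u],x)=\id(G-N[u],x)$, and since the hypothesis $N[u]=N[v]$ is symmetric in $u$ and $v$, the same argument gives $\id(G\circ u - N[v],x)=\id(G-N[u],x)$.

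Substituting these three identities into the edge-deletion theorem and collecting the coefficient of $\id(G-N[u],x)$ produces $-x^2+x+x = 2x-x^2$, which is precisely the asserted formula. I expect the only non-routine point to be the verification in the second paragraph that the loops introduced by the $\circ$-operation all vanish after the deletion of $N[u]$; once this is granted, the corollary follows by direct substitution and simplification.
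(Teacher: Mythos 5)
Your proposal is correct and follows exactly the route the paper intends: the paper states this corollary without proof as an immediate consequence of the preceding edge-deletion theorem, and your substitutions ($N[u,v]=N[u]$, and $G\circ v-N[u]=G-N[u]$ because every looped vertex lies in $N(v)\subseteq N[u]$ and is deleted) are precisely the details being left to the reader. Your verification that the loops vanish, and the symmetric treatment of the two $\circ$-terms giving the coefficient $x+x-x^2=2x-x^2$, is complete and accurate.
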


\section{Special graph classes}\label{section::id-specialgraphclasses}
In general the calculation of the independent domination polynomial is in $\#P$ \cite{Goddard2013}, but for some special graph classes we can prove some nice recursive or closed equations. For the edgeless graph $E_n$ the independent domination polynomial is simply $x^n$. In the complete graph every independent dominating set has the size one and therefore
\begin{equation}\label{eqn::id-completeGraph}
\id(K_n,x) = nx.
\end{equation}

\begin{theorem}\label{theorem::id-completebipartite}
Let $K_{pq}=(V_1\cup V_2,E)$ be the complete bipartite graph and $p,q\geq 1$. Then
\begin{equation*}
\id(K_{pq},x) = x^p+x^q.
\end{equation*}
\end{theorem}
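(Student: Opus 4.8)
The plan is to argue directly about the structure of the independent dominating sets of $K_{pq}$, because the tempting shortcut is unavailable: one would like to recognise $K_{pq}$ as the join $E_p \join E_q$ and invoke the join theorem, but that theorem was stated only for non-trivial graphs, whereas $E_p$ and $E_q$ are edgeless. The target $x^p + x^q$ is nevertheless exactly $\id(E_p,x) + \id(E_q,x)$, so the computation will confirm that the join identity persists even in this degenerate case.

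First I would use the bipartite structure to constrain independence. Since every vertex of $V_1$ is adjacent to every vertex of $V_2$, no independent set can contain a vertex from each part; hence any independent set, and in particular any independent dominating set, lies entirely within $V_1$ or entirely within $V_2$.

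Second I would analyse domination inside a single part, say $V_1$. Because $V_1$ carries no internal edges, a vertex $v\in V_1\setminus W$ has its whole neighbourhood in $V_2$, so a set $W\subseteq V_1$ can dominate $v$ only when $v\in W$. Consequently the only subset of $V_1$ that dominates all of $V_1$ is $V_1$ itself, and $V_1$ really is an independent dominating set: it is internally edge-free, and (using $p\geq 1$) it dominates every vertex of $V_2$ through the complete bipartite edges. The symmetric argument applied to $V_2$ (using $q\geq 1$) shows that $V_2$ is the only candidate on that side.

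Finally I would assemble the count: the independent dominating sets of $K_{pq}$ are exactly $V_1$, of cardinality $p$, and $V_2$, of cardinality $q$, so that $\id(K_{pq},x)=x^p+x^q$. The only point needing a little care is the coincident case $p=q$, where $V_1$ and $V_2$ are distinct sets of equal size that jointly contribute $2x^p$, in agreement with the formula. There is no genuine obstacle here; the whole content of the argument is the observation that, within an edgeless part, domination forces the entire part to be selected.
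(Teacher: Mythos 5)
Your proof is correct and follows essentially the same argument as the paper: independence confines any candidate set to one part, and the absence of internal edges forces that entire part to be selected, yielding exactly $V_1$ and $V_2$ as the independent dominating sets. Your opening remark about why the join theorem for $E_p \join E_q$ cannot be invoked (it requires non-trivial graphs) is a nice aside, but the core reasoning coincides with the paper's proof.
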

\begin{proof}
If in $V_1$ at least one vertex is dominating, then in $V_2$ all vertices are dominated. Therefore, all vertices in $V_1$ must be dominating so that they are a dominating set in the graph. The same argumentation holds if at least one vertex in $V_2$ is dominating.
\end{proof}

\begin{theorem}
Let $G=(V,E)$ be the path $P_n$ with at least four vertices. Then
\begin{equation*}
\id(P_n,x) = x\id(P_{n-2},x)+x\id(P_{n-3},x),
\end{equation*}
with the initial conditions
\begin{equation*}
\id(P_1,x) = x,\textnormal{ } \id(P_2,x) = 2x \textnormal{ and } \id(P_3,x) = x^2+x.
\end{equation*}
\end{theorem}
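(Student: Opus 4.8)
The plan is to prove the recurrence by a direct combinatorial decomposition of the independent dominating sets of $P_n$ according to how they cover the endpoint, and to verify the three initial conditions by hand. Write $P_n$ as the path on vertices $v_1, v_2, \dots, v_n$ with edges $\{v_i, v_{i+1}\}$. First I would observe that in any independent dominating set $W$ of $P_n$ exactly one of $v_1$ and $v_2$ lies in $W$: both cannot, since $v_1v_2$ is an edge and $W$ is independent; and at least one must, since $v_1$ has $v_2$ as its only neighbor and so $v_1$ is dominated only if $v_1\in W$ or $v_2\in W$. This dichotomy is what produces the two terms of the recurrence.

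In the first case $v_1 \in W$ and hence $v_2 \notin W$. I would argue that $W \mapsto W \setminus \{v_1\}$ is a bijection between such sets and the independent dominating sets of the subpath $P_{n-2}$ induced on $v_3, \dots, v_n$. The point to check is that the domination status of $v_3$ is unaffected by deleting $v_1$ and $v_2$: since $v_2 \notin W$, the vertex $v_3$ is dominated in $P_n$ if and only if $v_3 \in W$ or $v_4 \in W$, which is exactly its domination condition inside $P_{n-2}$; independence is inherited immediately. Accounting for the single selected vertex $v_1$ contributes the factor $x$, so this case yields $x\,\id(P_{n-2},x)$.

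In the second case $v_1 \notin W$ and $v_2 \in W$, which forces $v_3 \notin W$ by independence, while $v_2$ already dominates $v_1$ and $v_3$. I would set up the analogous bijection $W \mapsto W \setminus \{v_2\}$ onto the independent dominating sets of the subpath $P_{n-3}$ induced on $v_4, \dots, v_n$, again checking that the domination of the new boundary vertex $v_4$ is governed only by $v_4$ and $v_5$ (its neighbor $v_3$ being absent from $W$). This case contributes $x\,\id(P_{n-3},x)$, and summing the two cases gives the claimed recurrence. The hypothesis $n \geq 4$ is exactly what guarantees that $P_{n-3}$ is a genuine nonempty path, so that both reductions stay within the framework already established.

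The initial conditions are then read off directly from the definition: for $P_1$ the only independent dominating set is $\{v_1\}$; for $P_2$ the sets $\{v_1\}$ and $\{v_2\}$ are the two maximal independent sets; and for $P_3$ the maximal independent sets are $\{v_2\}$ and $\{v_1, v_3\}$, giving $x$, $2x$, and $x^2 + x$ respectively. I expect the only real care to be needed in the two bijection arguments, specifically in verifying that deleting the already-dominated, unselected boundary vertices does not silently alter whether the next vertex is dominated; once that is pinned down the remainder is bookkeeping. As an alternative I could instead feed the endpoint $v = v_1$ into Theorem \ref{theorem::id-recurrence}, where $G - v_1 = P_{n-1}$ and $G - N[v_1] = P_{n-2}$, and where $\id(G \circ v_1, x)$ counts the independent dominating sets of $P_{n-1}$ avoiding $v_2$; simplifying $\id(P_{n-1},x) - \id(G\circ v_1,x)$ to the sets containing $v_2$ recovers the $x\,\id(P_{n-3},x)$ term and yields the same recurrence.
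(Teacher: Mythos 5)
Your proposal is correct and follows essentially the same route as the paper: a case split on whether the endpoint $v_1$ or its neighbor $v_2$ lies in the independent dominating set, yielding the terms $x\id(P_{n-2},x)$ and $x\id(P_{n-3},x)$ respectively. Your write-up merely makes explicit the bijection details (and the endpoint-domination check) that the paper's two-sentence proof leaves implicit, and the initial conditions you verify by hand match the paper.
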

\begin{proof}
If the first vertex of the path is dominating, then the second is dominated and therefore it can not be dominating. This case will be counted by $x\id(P_{n-2},x)$. If the first vertex is non-dominating, then the second vertex must be dominating. This gives $x\id(P_{n-3},x)$ and the theorem follows.
\end{proof}

Moreover, we can use prove an explicit formula for the independent domination polynomial of the path $P_n$.
\begin{theorem}\label{theorem::path}
Let $G=(V,E)$ be the path $P_n$ with $n\geq 2$. Then
\begin{equation}\label{eqn::id-path}
\id(P_n,x)=\sum_{k=1}^{\lfloor (n+3)/2\rfloor} \binom{k+1}{n-2k+1}x^{k}.
\end{equation}
\end{theorem}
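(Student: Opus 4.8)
The plan is to argue by strong induction on $n$, using the recurrence $\id(P_n,x) = x\id(P_{n-2},x)+x\id(P_{n-3},x)$ established in the previous theorem. Writing the claimed right-hand side as $f_n(x)=\sum_{k}\binom{k+1}{n-2k+1}x^{k}$, I would first observe that $\binom{k+1}{n-2k+1}$ vanishes unless $n/3\le k\le(n+1)/2$, so the explicit upper limit $\lfloor(n+3)/2\rfloor$ only cuts off zero terms and the sum may be taken over all integers $k\ge 1$ without changing its value. This reformulation lets me compare coefficients term by term, free of any worry about the summation limits.

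For the base of the induction I would check directly that $f_n$ agrees with $\id(P_n,x)$ for the small indices needed to start the recurrence. Because $\id(P_4,x)$ expands in terms of $P_2$ and $P_1$, I would verify the formula for $n=1,2,3$: substituting these values and using $\binom{m}{j}=0$ for $j<0$ leaves $f_1=x$, $f_2=2x$ and $f_3=x^2+x$, matching the stated initial conditions together with $\id(P_1,x)=x$.

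The inductive step is the heart of the argument. Assuming the formula holds for $P_{n-2}$ and $P_{n-3}$, the coefficient of $x^k$ in $x\id(P_{n-2},x)+x\id(P_{n-3},x)$ equals the coefficient of $x^{k-1}$ in $f_{n-2}$ plus that in $f_{n-3}$, that is $\binom{k}{n-2k+1}+\binom{k}{n-2k}$. The whole computation then collapses to a single application of Pascal's rule $\binom{k}{m}+\binom{k}{m-1}=\binom{k+1}{m}$ with $m=n-2k+1$, which produces exactly $\binom{k+1}{n-2k+1}$, the coefficient of $x^k$ in $f_n$. This closes the induction.

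The main obstacle I anticipate is purely bookkeeping: keeping the two index shifts correct and confirming that the terms lying outside the explicit summation range are genuinely zero, so that the clean Pascal identity applies uniformly for every $k$. Once the sum is recognized as ranging over all $k$ with vanishing tail terms, no boundary case analysis is required.
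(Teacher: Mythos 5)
Your proof is correct, but it takes a genuinely different route from the paper. You argue by strong induction on $n$ via the recurrence $\id(P_n,x)=x\id(P_{n-2},x)+x\id(P_{n-3},x)$, reducing the identity to a single application of Pascal's rule $\binom{k}{n-2k+1}+\binom{k}{n-2k}=\binom{k+1}{n-2k+1}$ after verifying $n=1,2,3$ by hand; your observation that $\binom{k+1}{n-2k+1}$ vanishes outside $n/3\le k\le (n+1)/2$ correctly disposes of all boundary issues, and the index shifts check out (for instance the coefficient of $x^1$ in $f_n$ is $\binom{2}{n-1}=0$ for $n\ge 4$, matching the fact that no single vertex dominates a path on four or more vertices). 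The paper instead proves the formula directly, without the recurrence: it counts the independent dominating sets of size $k$ combinatorially, noting that consecutive chosen vertices on the path must be separated by exactly one or two unchosen vertices, so the $n-k-(k-1)=n-2k+1$ extra unchosen vertices are distributed, at most one apiece, among the $k+1$ slots before, between, and after the chosen vertices, giving $p(n,k)=\binom{k+1}{n-2k+1}$ by a direct positional count. The paper's argument buys a self-contained combinatorial interpretation of each coefficient, explaining where the binomial comes from; yours buys a mechanical, easily checked verification that leans on the previously established recurrence and explains the binomial only through Pascal's identity. As a small bonus, your induction also establishes the formula for $n=1$, slightly extending the stated range $n\ge 2$.
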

\begin{proof}
Let $p(n, k)$ be the number of independent dominating sets $W$ of $P_n$ with exactly $k$ vertices. From the defnition it follows that between the vertices of $W$ there has to be a vertex not in $W$. Additionally, there have to be $n - k - (k - 1)$ other vertices not in $W$, one or none of them may be "before" the rest vertex in $W$, between such vertices (so that there are altogether two of them) or "behind" the last vertex in $W$. Hence there are $k + 1$ possible positions. It follows that $p(n,k) = \binom{k+1}{n-2k+1}$. Consequently,
\begin{equation*}
\id(P_n,x)=\sum_{k=1}^{\lfloor (n+3)/2\rfloor} p(n,k) x^k =\sum_{k=1}^{\lfloor (n+3)/2\rfloor} \binom{k+1}{n-2k+1} x^k.
\end{equation*}
\end{proof}

We can use the polynomial of the path $P_n$ to prove a theorem for the cycle $C_n$.
\begin{theorem}
Let $G=(V,E)$ be the cycle $C_n$ ($n\geq 7$). Then
\begin{equation*}
\id(C_n,x) = 2x\id(P_{n-3},x)+x^2\id(P_{n-6},x).
\end{equation*}
\end{theorem}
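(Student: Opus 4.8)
The plan is to fix a reference vertex, say $v_1$ with neighbours $v_2$ and $v_n$ (indices taken cyclically), and to partition the independent dominating sets $W$ of $C_n$ into three classes according to the status of the pair $(v_1,v_2)$. In each class a short forcing argument pins down the membership of a handful of vertices near $v_1$, after which the remaining vertices induce a path whose independent dominating sets are in bijection with the admissible completions of $W$. Reading off one term per class should then assemble the claimed identity directly, without invoking the path recurrence.

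First I would treat the case $v_2\in W$. Then $v_1,v_3\notin W$ by independence, both are dominated by $v_2$, and the vertices $v_4,\ldots,v_n$ induce a path $P_{n-3}$ whose two external neighbours $v_3$ and $v_1$ lie outside $W$. Consequently the part of $W$ inside this sub-path is exactly an independent dominating set of $P_{n-3}$, and conversely every such set extends uniquely by adding $v_2$; this class contributes $x\,\id(P_{n-3},x)$. Symmetrically, if $v_2\notin W$ but $v_1\in W$, then $v_n\notin W$ and the vertices $v_3,\ldots,v_{n-1}$ induce a $P_{n-3}$ that must be independently dominated from within, giving a second $x\,\id(P_{n-3},x)$.

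Next I would handle the remaining case $v_1,v_2\notin W$. Here domination forces a member of $W$ on each side: $v_2$ can be dominated only by $v_3$ and $v_1$ only by $v_n$, so $v_3,v_n\in W$, whence $v_4,v_{n-1}\notin W$ by independence. The vertices $v_5,\ldots,v_{n-2}$ then induce a path $P_{n-6}$ with both external neighbours outside $W$, so this class contributes $x^2\,\id(P_{n-6},x)$. Adding the three contributions yields $\id(C_n,x)=2x\,\id(P_{n-3},x)+x^2\,\id(P_{n-6},x)$.

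The step I expect to be the main obstacle is verifying that each reduction is genuinely a bijection, i.e. that the vertices I declare ``forced'' really are forced and that no additional constraint couples the surviving sub-path to the already-determined vertices (in particular that the wrap-around at $v_1$ and $v_n$ introduces no extra domination requirement). This is also where the hypothesis $n\geq 7$ enters: it is exactly what guarantees that the vertices $v_3,v_4,v_5,v_{n-2},v_{n-1},v_n$ are pairwise distinct and that the shortest sub-path $P_{n-6}$ is non-degenerate, so that $\id(P_{n-6},x)$ is defined by the earlier results on paths. As a consistency check I would note that an alternative split on the pair $(v_2,v_n)$ produces the three terms $x\,\id(P_{n-3},x)+x^2\,\id(P_{n-5},x)+2x^2\,\id(P_{n-6},x)$, which collapses to the same answer upon substituting the path recurrence $\id(P_{n-3},x)=x\,\id(P_{n-5},x)+x\,\id(P_{n-6},x)$.
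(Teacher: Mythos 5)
Your proof is correct and takes essentially the same approach as the paper: a case split on the statuses of $v_1$ and $v_2$ (the paper phrases its three cases as $v_1$ dominating; $v_1$ not dominating and $v_2$ dominating; neither dominating), each case reducing to an induced path and yielding the terms $x\id(P_{n-3},x)$, $x\id(P_{n-3},x)$, and $x^2\id(P_{n-6},x)$. One cosmetic slip: for $n=7$ the vertices $v_5$ and $v_{n-2}$ coincide rather than being pairwise distinct, but this is harmless since all the argument actually needs is $n-6\geq 1$, so that the sub-path $P_{n-6}$ is nonempty.
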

\begin{proof}
Suppose we have a numbering of the vertices. Starting with one up to $n$. If the vertex $1$ of the cycle is dominating, then its two neighbors $2$ and $n$ are dominated and they cannot be dominating. This case will be counted by $x\id(P_{n-3},x)$. If the vertex $1$ is non-dominating, then one of its neighbors must be dominating. If the vertex $2$ is dominating, then the vertex $n$ can be dominating or not. This case will be counted by $x\id(P_{n-3},x)$. If the vertices $1$ and $2$ are non-dominating, then the vertices $3$ and $n$ must be dominating. This gives the last part of the sum and the theorem is proved.
\end{proof}

Using Equation (\ref{eqn::id-path}) gives
\begin{equation*}
\id(C_n,x) = \sum_{k=0}^{\lfloor \frac{n-2}{2}\rfloor }\left( 2 \binom{k+2}{n-2k-4}+\binom{k+1}{n-2k-5}\right) x^{k+2}.
\end{equation*}

\section{Conclusion and open problems}
For the independent domination polynomial of some graph products, nice theorems are known. Is it possible to find more such results for other products or for the cartesian product $G\Box H$?

We introduced the $\circ$-operation for vertices of the graph and get a recurrence equation in respect to this operation. Is it possible to prove similar recurrence equations for the domination polynomial or the total domination polynomial?

\section*{Acknowledgement}
The author would like to thank Peter Tittmann for very helpful ideas and discussions which improved the paper.
Moreover, the author would like to express his gratitude to Manja Reinwardt and Ester Then for their careful reading and helpful comments.

\nocite{*}

\end{document}